\theoremstyle{plain}
\newtheorem{theorem}{Theorem}[section]
\newtheorem{proposition}[theorem]{Proposition}
\newtheorem{corollary}[theorem]{Corollary}
\newtheorem{lemma}[theorem]{Lemma}
\theoremstyle{definition}
\newtheorem{definition}[theorem]{Definition}
\newtheorem*{remark}{Remark}
\newcommand{\R}{\mathbb{R}}
\newcommand{\N}{\mathbb{N}}
\newcommand{\Deg}{\operatorname{Deg}}
\newcommand{\D}{\Deg_{\max}}
\newcommand{\eps}{\varepsilon}
\newcommand{\Hidden}[1]{}
\newcommand{\cC}{\mathcal{C}}
\newcommand{\drm}{\mathrm{d}}
\newcommand{\euler}{\mathrm{e}}
\newcommand{\cL}{\mathcal{L}}
\DeclareMathOperator{\vol}{vol}
\DeclareMathOperator{\diam}{diam}
\begin{document}

\title{Spectrally positive Bakry-{\'E}mery Ricci curvature on graphs}
\author{Florentin M\"unch\footnote{MPI Leipzig, muench@mis.mpg.de}~~~~~Christian Rose\footnote{MPI Leipzig, crose@mis.mpg.de}}
\date{}
\maketitle

\begin{abstract}
We investigate analytic and geometric implications of non-constant Ricci curvature bounds. We prove a Lichnerowicz eigenvalue estimate and finiteness of the fundamental group assuming that $\mathcal L+2\operatorname{Ric}$ is a positive operator where $\mathcal L$ is the graph Laplacian.
Assuming that the negative part of the Ricci curvature is small in Kato sense, we prove diameter bounds, elliptic Harnack inequality and Buser inequality.
This article seems to be  the first one establishing these results while allowing for some negative curvature.

\end{abstract}

%%%%%%%%%%%%%%
%\section{Introduction}
%%%%%%%%%%%%%%
%
%
%
%
%
%
%
%
%

%\tableofcontents

\section{Introduction}

Analytic properties of graphs and manifolds such as eigenvalue estimates for the Laplacian or gradient estimates for the heat semigroup depend crucially on curvature conditions. %For general graphs, in [Yau] it was shown that Gau\ss{}ian heat kernel bounds can be obtained assuming the extended Bakry-{\'E}mery condition on the Ricci curvature. It is not clear that other curvature constructions on graphs such as the curvature dimension or Ollivier condition imply similar results. 
In the last decade, there has been increasing interest in discrete Ricci curvature notions based on
\cite{schmuckenschlager1999curvature,
ollivier2007ricci,erbar2012ricci,
mielke2013geodesic}. Indeed, most of the research so far is focused on constant curvature bounds.

Recently, authors began to generalize the uniform bounds in the curvature dimension condition to obtain Bonnet-Myers type theorems under almost positivity assumptions with exceptions \cite{liu2017distance,Muench-18}. A big gap in dealing with almost positive curvature is that it is not clear that Sobolev inequalities hold on graphs even in the case of uniformly positive curvature, such that techniques based on heat kernel estimates do not carry over immediately. To overcome this issue, we deal here with spectrally positive Ricci curvature in the sense that for some $K\geq 0$, we have 
\begin{equation}\label{specpos}
\frac 12\cL +\rho\geq K, 
\end{equation}
where $\cL=-\Delta\geq 0$ is the non-negative Laplacian of the graph and $\rho\colon V\to\R$ maps every point to the smallest value of the Bakry-{\'E}mery Ricci curvature at a given vertex. This will be made precise in the next section. 

Spectrally positive Ricci curvature on Riemannian manifolds proved to be a fruitful analytic generalization for most of the well-known results depending on uniformly bounded Ricci curvature or smallness of its negative part in $L^p$-sense, see \cite{CarronRose-18}. A very powerful assumption implying spectral positivity is the so-called Kato condition \cite{Rose-16a,Carron-16,Rose-17,RoseStollmann-18,Rose-19}. Denote for $x\in\R$ its negative part by $x_-:=-\min\{0,x\}$. The function $W\geq 0$ satisfies the Kato condition if there is a $T>0$ such that
\begin{equation}\label{kato}
 k_T(W):=\int_0^T \Vert P_tW\Vert_\infty \drm t <1,
\end{equation}
where $(P_t)_{t\geq 0}$ denotes the heat semigroup. The major advantage of this condition is that one can treat mapping properties of perturbed semigroups without knowing anything about the perturbed heat kernel itself, \cite{RoseStollmann-15}, allowing to obtain explicit results as soon as one knows something about the unperturbed heat kernel. Here, we use this condition for the function $W:=(\rho-K)_-$ for some $K>0$. 

In this paper, we will prove Lichnerowicz estimates for Bakry-{\'E}mery spectrally positive Ricci curvature, and we prove a Bonnet-Myers type theorem under a Kato condition on the curvature. Therefore, we use a combination of the original proof of Lichnerowicz together with the Perron-Frobenius theorem. Also under  spectrally positive Ricci curvature, we prove finiteness of the fundamental group. Assuming additionally that a refinement of \eqref{kato} holds, we prove a gradient estimate for the perturbed semigroup generated by $-\Delta+2\rho$, and use perturbation results to get elliptic Harnack and Buser inequalities. 
Originally, Harnack and Buser inequality are stated for non-negatively curvature. Our version requires a Kato condition comparing to positive curvature as some positive curvature is required to compensate the admissible occurrence of negative curvature.

\section{The setup}

\subsection{Graph Laplacians and variable Bakry-\'Emery curvature}

A measured and weighted graph (short \emph{mwg}) $G=(V,w,m)$ is a triple consisting of a countable set $V$, a symmetric function $w\colon V\times V\to [0,\infty)$ which is zero on the diagonal, and a function $m\colon V\to (0,\infty)$. We always assume that mwg are locally finite, i.e., given $G=(V,w,m)$ and $x\in V$, we always assume that the set of $y\in V$ with $w(x,y)>0$ is finite, and that $G$ is connected. We abbreviate $q(x,y):= w(x,y)/m(x)$, and define $\Deg(x):= \sum_{y\in V} q(x,y)$, $x\in V$. Furthermore, we let $\D:=\sup_x \Deg(x)$. 

\begin{definition}
Let $G=(V,w,m)$ be a mwg. We define the (combinatorial) metric $d\colon V\times V\to [0,\infty]$ by
\[
d(x,y):=\min\{n\in\N_0\mid \text{there exist }x=x_0,\dots, x_n=y \text{ s.t. }w(x_i,x_{i-1})>0\text{ for all }i=1,\ldots,n\},
\]
and the (combinatorial) diameter by $\diam G := \sup_{x,y\in V} d(x,y)$. 
\end{definition}
By $\cC(V)$ we denote the set of functions on $V$, and by $\cC_c(V)$ the functions in $\cC(V)$ with finite support. The map $\langle\cdot,\cdot\rangle\colon L^2(V)\times L^2(V)\to\R$ denotes the $L^2$-inner product and $\Vert\cdot\Vert_2^2:=\langle\cdot,\cdot\rangle$ the corresponding $L^2$-norm. The Laplacian $\Delta\colon\cC(V)\to\cC(V)$ is given by 
$$\Delta f(x):=\sum_{y\in V} q(x,y)(f(y)-f(x)), \quad x\in V, f\in \cC(V).$$
Note that $\Delta\leq 0$ in quadratic form sense. We denote by $L$ the non-negative Laplacian $L=-\Delta\geq 0$, which is the generator of the Dirichlet form
$$ Q(f,f)= \langle Lf,f\rangle,$$
and domain 
$$D(Q)=\overline{\cC_c(V)}\cap\{f\in L^2(V)\mid \Delta f\in L^2(V)\}$$
where the closure is taken w.~r.~t. the form norm $\Vert\cdot\Vert_Q^2:=Q(\cdot,\cdot)+\Vert\cdot\Vert_2^2$.

The definition of the Laplacian leads to the so-called carr\'e du champ operator $\Gamma$: for all $f,g\in\cC(V)$ bounded, $x\in V$:

$$\Gamma(f,g)(x) = \frac 12 (\Delta(fg)-f\Delta g -g\Delta f)(x).$$

Replacing the pointwise product by $\Gamma$, we can define another form $\Gamma_2,$ given by

$$\Gamma_2(f,g)(x)=\frac 12(\Delta\Gamma(f,g)-\Gamma(f,\Delta g)-\Gamma(g,\Delta f))(x),\quad x\in V, f,g\in\cC(V) \, \text{bounded}.$$

For simplicity, we abbreviate $\Gamma f:=\Gamma(f,f)$ as well as $\Gamma_2 f:=\Gamma_2(f,f)$. 

\begin{definition}
Let $G=(V,w,m)$ be a mwg and $\rho\colon V\to\R$ and $n\in(0,\infty]$. We say that $G$ satisfies the variable curvature dimension condition $CD(\rho,n)$ iff for all bounded $f\in\cC(V)$, we have
 $$\Gamma_2(f)(x)\geq \rho(x)\Gamma(f)(x)+\frac 1n (\Delta f)^2(x), \quad  x\in V.$$
\end{definition}

%
%
%
%
%
%
%
%
%
%
%
%

%\subsection{Geometric properties of graphs}

%
%
%
%
%
%
%
%
%
%
%
%

%\textcolor{red}{HIER FUNDAMENTALGRUPPE}

\subsection{Spectral curvature assumptions and the Kato condition}

\begin{definition}
Let $G=(V,w,m)$ be a mwg, $\rho\in\cC(V)$, $n\in (0,\infty]$, and $K>0$. We say that $G$ has $(K,n,\rho)$-spectrally positive Ricci curvature iff $CD(\rho,n)$ holds and 
 $$\frac 12 L +\rho\geq K$$
 in quadratic form sense, i.e.,
 $$\frac 12 \langle Lf,f\rangle + \langle \rho f,f\rangle\geq K\langle f,f\rangle,\quad f\in \cC_c(V).$$
%We also define the classes
%$$ \cG(K,n,\rho)=\{ G=(V,w,m)\, \text{is mwg}\mid G \, \text{satisfies $CD(\rho,n)$ and is $(K,n,\rho)$-spectrally positive}\},$$
%and 
%$$\cG_f(K,n,\rho)=\{G\in \cG(K,n,\rho)\mid G \, \text{finite}\}.$$

%$$ \cG(K)=\{ G=(V,w,m)\, \text{is mwg}\mid \,\exists \rho\in\cC(V), \, n\in(0,\infty]\colon G \text{ is $CD(\rho,n)$ and $(K,n,\rho)$-spectrally positive}\},$$
%and 
%$$\cG_f(K)=\{G\in \cG(K)\mid G \, \text{finite}\}.$$
\end{definition}

Let $(P_t)_{t\geq 0}:=(\euler^{-tL})_{t\geq 0}$ denote the heat semigroup generated by $L$, and for a function $W\colon V\to\R$ we abbreviate $$(P_t^W)_{t\geq 0}:=(\euler^{-t(L+W)})_{t\geq 0}.$$ 

Since $L$ generates a Dirichlet form in $L^2(V)$, we can consider the following quantity which proved to be useful in perturbation theory of Dirichlet forms: 
For $T>0$ and $W\colon V\to [0,\infty)$, we let
$$k_T(W):=\sup_{n\in\N}\int_0^T \Vert P_t(n\wedge W)\Vert_\infty \drm t,$$
and say that $W$ satisfies the \emph{Kato condition} if there is a $T>0$ such that $k_T(W)<1$. In this case, general results from \cite{StollmannVoigt-96,Voigt-86} imply that mapping properties from $(P_t)_{t\geq 0}$ carry over to $(P_t^W)_{t\geq 0}$ and can also be improved. (Note that the truncation procedure ensures that $k_T(W)$ is well-defined.) %For $x\in\R$, we denote by $x_-:=-\min(0,x)$ the negative part of $x$. 
\begin{remark}
The Kato condition proved to be very useful in the context of Riemannian manifolds, see, e.g., \cite{Carron-16, Rose-16a, CarronRose-18, RoseStollmann-18,Rose-19} and the references therein.
\end{remark}

To illustrate the meaning of the Kato condition on graphs, we construct a graph satisfying the Kato condition with negative curvature somewhere.
The graph consists of three vertices $V=\{x,y,z\}$ with $x \sim y \sim z \not \sim x$ such that $m(x) = \eps$ and $m(y)=1$ and $w(x,y)=1$.
Regarding the curvature computation, we refer to
\cite[Proposition~2.1]{hua2017ricci}.
We choose $w(y,z)$ such that the curvature at $x$ is a bit negative, e.g. $w(y,z)= \frac 1 \eps + 3 + \eps$ and now we can choose $m(z)$ such that the curvature at $y$ and $z$ is very positive, e.g., $m(z)=\eps$. It is straight forward to check that this graph satisfies the Kato condition from the following Lemma for $K=T=1$ and sufficiently small $\eps$.

%\begin{definition}
%Let $\rho\colon V\to\R$, $T,n\in (0,\infty]$, and $K\geq 0$. We let
%$$\cK(T,K,n,\rho):=\{ G=(V,w,m)\, \text{is mwg}\mid G \,\text{satisfies}\, CD(\rho,n) \,\text{and}\,  k_T((\rho-K)_-)<\frac 12(1-\euler^{-KT/3})\},$$
%and 
%$$\cK_f(T,K,n,\rho):=\{G\in \cK(T,K,n,\rho)\mid G \, \text{finite}\}.$$
%Let $K,T>0$. We define 
%$$\cK(T,K):=\left\{ G=(V,w,m)\, \text{is mwg}\mid \,\exists \rho\in\cC(V), \, n\in(0,\infty]\colon G \,\text{is}\, CD(\rho,n) \,\text{and}\,  k_T((\rho-K)_-)<\frac 12(1-\euler^{-KT/3})\right\},$$
%$$\cK(T,K):=\left\{ G=(V,w,m)\, \text{is mwg}\mid \,\exists \rho\in\cC(V), \, n\in(0,\infty]\colon G \,\text{is}\, CD(\rho,n) \,\text{and}\,  k_T((\rho-K)_-)<\frac 12\left(1-\euler^{-KT/4}\right)\right\},$$
%and 
%$$\cK_f(T,K):=\{G\in \cK(T,K)\mid G \, \text{finite}\}.$$

%\end{definition}

%For $x\in\R$, we denote by $x_-:=-\min(0,x)$ the negative part of $x$. 

\begin{lemma}\label{sglemma}
%Let $K\geq 0$, $T,n\in (0,\infty]$ and $\rho\in\cC(V)$. If $G\in\cK(T,K,n,\rho)$ (resp. $G\in\cK_f(T,K,n,\rho))$, then $G\in \cG(K,n,\rho) $ (resp. $G\in\cG_f(K,n,\rho)$).
 %Let $G$ be a graph satisfying $CD(\rho,n)$ for some $\rho\colon V\to\R$ and $n\in (0,\infty]$. 
%If $k_T(\rho_-)<1$ for some $T>0$, then
Let $G$ be a finite or infinite mwg, and $K,T>0$, $n\in(0,\infty]$, $\rho\in\cC(V)$. If $CD(\rho,n)$ holds and
\[
k_T((\rho-K)_-)\leq\frac 12\left(1-\euler^{-KT/4}\right),
\]

then $G$ is $(K/2,n,\rho)$-spectrally positive.
Further, the semigroup $P_t^{\rho}$ maps $L^p(V)$ to $L^p(V)$ continuously for $t>0$ and all $p\in [1,\infty]$, and we have 
$$\Vert P_t^{\rho}\Vert_{\infty,\infty}\leq \euler^{K(T-t)/2}, \quad t>0.$$
Moreover, 
\[
k_T((\rho-K)_-)\leq\frac 14\left(1-\euler^{-KT/2}\right)
\]
implies
$$\Vert P_t^{2\rho}\Vert_{\infty,\infty}\leq \euler^{K(T-t)}, \quad t>0.$$
\end{lemma}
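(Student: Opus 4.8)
The plan is to reduce all three conclusions to a single perturbation estimate for $\euler^{-t(L-V)}$ with a nonnegative Kato potential $V$, applied to $V\in\{W,2W\}$ where $W:=(\rho-K)_-\geq 0$. The starting observation is the pointwise decomposition $\rho=(K-W)+(\rho-K)_+$ with $(\rho-K)_+\geq 0$, so that $\rho\geq K-W$ and $2\rho\geq 2K-2W$. By monotonicity of Schr\"odinger semigroups in the potential (adding the nonnegative part $(\rho-K)_+$ only shrinks the semigroup), I would record the two domination inequalities
\[
0\leq P_t^{\rho}\leq \euler^{-Kt}\,\euler^{-t(L-W)},\qquad 0\leq P_t^{2\rho}\leq \euler^{-2Kt}\,\euler^{-t(L-2W)},
\]
valid pointwise, together with $k_T(2W)\leq 2k_T(W)$ (from $n\wedge 2W\leq 2(n\wedge W)$).

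Next I would establish the core estimate. By the Dyson--Phillips expansion $\euler^{-t(L-V)}=\sum_{n\geq 0}S_n(t)$ with $S_0(t)=P_t$ and $S_{n+1}(t)=\int_0^t P_{t-s}V S_n(s)\,\drm s$, the positivity and $L^\infty$-contractivity of $P_t$ together with $V\geq 0$ yield, via $P_{t-s}(V\abs{g})\leq\Vert g\Vert_\infty P_{t-s}V$, the recursion $\sup_{0\leq t\leq T}\Vert S_{n+1}(t)\Vert_{\infty,\infty}\leq k_T(V)\sup_{0\leq t\leq T}\Vert S_n(t)\Vert_{\infty,\infty}$. Summing the geometric series gives $\sup_{0\leq t\leq T}\Vert \euler^{-t(L-V)}\Vert_{\infty,\infty}\leq(1-k_T(V))^{-1}$ when $k_T(V)<1$, and splitting $t$ into $\lceil t/T\rceil$ subintervals and invoking the semigroup property produces
\[
\Vert \euler^{-t(L-V)}\Vert_{\infty,\infty}\leq (1-k_T(V))^{-1-t/T},\qquad t>0.
\]

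The $L^\infty$-bounds are then pure arithmetic. For $P_t^{\rho}$, the hypothesis $k_T(W)\leq\tfrac12(1-\euler^{-KT/4})$ gives $1-k_T(W)\geq\tfrac12(1+\euler^{-KT/4})$, and with $u=\euler^{-KT/4}$ the inequality $(2u+1)(u-1)\leq 0$ (valid for $0<u\leq 1$) yields $(1-k_T(W))^{-1}\leq\euler^{KT/2}$; hence $\Vert\euler^{-t(L-W)}\Vert_{\infty,\infty}\leq\euler^{K(T+t)/2}$ and $\Vert P_t^{\rho}\Vert_{\infty,\infty}\leq\euler^{-Kt}\euler^{K(T+t)/2}=\euler^{K(T-t)/2}$. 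For $P_t^{2\rho}$, the hypothesis $k_T(W)\leq\tfrac14(1-\euler^{-KT/2})$ gives $k_T(2W)\leq\tfrac12(1-\euler^{-KT/2})$, and with $u=\euler^{-KT/2}$ the elementary $2u\leq 1+u$ gives $(1-k_T(2W))^{-1}\leq\euler^{KT}$, so $\Vert P_t^{2\rho}\Vert_{\infty,\infty}\leq\euler^{-2Kt}\euler^{K(T+t)}=\euler^{K(T-t)}$. The claimed $L^p$-continuity of $P_t^{\rho}$ follows from self-adjointness (so $\Vert\cdot\Vert_{1,1}=\Vert\cdot\Vert_{\infty,\infty}$) and Riesz--Thorin interpolation. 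For spectral positivity it suffices to show $L+2\rho\geq K$, i.e. $2W\leq L+K$ in form sense: under the first hypothesis $k_T(2W)\leq 1-\euler^{-KT/4}$, so $\Vert\euler^{-t(L-2W)}\Vert_{\infty,\infty}\leq\euler^{K(T+t)/4}$, which self-adjointness and interpolation upgrade to $\Vert\euler^{-t(L-2W)}\Vert_{2,2}\leq\euler^{K(T+t)/4}$. Since this norm equals $\euler^{-t\,\inf\operatorname{spec}(L-2W)}$, letting $t\to\infty$ gives $\inf\operatorname{spec}(L-2W)\geq-K/4$, whence $L+2\rho\geq(L-2W)+2K\geq\tfrac74K\geq K$, i.e. $\tfrac12 L+\rho\geq K/2$; combined with the assumed $CD(\rho,n)$ this is $(K/2,n,\rho)$-spectral positivity.

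The genuinely nontrivial input, and the step I expect to be the main obstacle, is the perturbation machinery underlying the core estimate: one must know that $k_T(V)<1$ makes $L-V$ a well-defined self-adjoint operator bounded below whose semigroup is given by the convergent, positivity-preserving Dyson series, and that Schr\"odinger semigroups are monotone in the potential (needed for the two domination inequalities above). On graphs these facts follow from the abstract theory of \cite{StollmannVoigt-96,Voigt-86}, which I would cite rather than reprove; everything else is the geometric-series bound, the reduction $k_T(2W)\leq 2k_T(W)$, the interpolation/self-adjointness arguments, and the two elementary scalar inequalities in $u$.
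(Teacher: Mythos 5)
Your proof is correct and takes essentially the same route as the paper's: dominate $P_t^{\rho}$ by $\euler^{-Kt}\euler^{-t(L-W)}$ with $W=(\rho-K)_-$ by dropping the positive part of the potential, control the latter semigroup on $L^\infty$ via the Miyadera--Voigt/Kato bound of the form $(1-k_T(W))^{-1-t/T}$, and rescale to $2\rho$, $2K$, $2W$ for the second assertion. The only difference is presentational: where the paper cites \cite{StollmannVoigt-96,Voigt-86} for the $L^\infty$-perturbation estimate and \cite{CarronRose-18} for the form bound giving spectral positivity, you reprove these explicitly via the Dyson--Phillips series and the $L^\infty$-to-$L^2$ interpolation argument (incidentally obtaining the slightly stronger bound $\tfrac12 L+\rho\geq \tfrac78 K$).
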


Let us briefly give an intuition of the assumption of the Kato constant in the lemma. The parameter $K$ tells up to which threshold positive curvature is employed to compensate negative curvature. The time $T$ tells how thick the spikes of negative curvature can be. Note that there is only some compact time window away from zero in which $T$ can be chosen provided that the curvature bound $CD(K,\infty)$ is violated.

We remark that the first part of the lemma is its classical version adapted from the Dirichlet form setting. The second part is a straight forward renormalization which will be applied in our curvature results. For convenience of the reader, we state the second part explicitly.

\begin{proof}
%If $G\in\cK(T,K)$, then there are $\rho\in \cC(V)$ and $n\in(0,\infty]$ such that $G$ satisfies $CD(\rho,n)$ and $$k_T((\rho-K)_-)<\frac 12\left(1-\euler^{-KT/4}\right).$$ 
Since $L$ generates a Dirichlet form, \cite[Theorem~3.1]{StollmannVoigt-96} implies that $(\rho-K)_-$ is relatively bounded with respect to $L$. Furthermore, as explained after \cite[Remark~2.3]{CarronRose-18}, a simple calculation shows that
$$\frac 12 L+\rho\geq K/2,$$ giving that $G$ is $(K/2,n,\rho)$-spectrally positive. %The same proof applies in the finite case.

The continuity properties for the perturbed semigroup follow directly from \cite[Theorem~3.1]{StollmannVoigt-96}. The Myadera-Voigt perturbation theorem, see \cite[Theorem~2.1]{Voigt-86}, gives
$$\Vert P_t^{-(\rho-K)_-}\Vert_{\infty,\infty}\leq \euler^{KT/2}, \quad t\in(0,T].$$
Hence,
\[
\Vert P_t^{\rho}\Vert_{\infty,\infty}=\Vert P_t^{K+\rho-K}\Vert_{\infty,\infty}\leq \euler^{-Kt}\Vert P_t^{-(\rho-K)_-}\Vert_{\infty,\infty}\leq \euler^{-Kt}\euler^{KT/2}, \quad t\in(0,T].
\]
For $t>T$, we split $t=kT+\epsilon$, $k\in\N$, $\epsilon\in(0,T]$, and calculate
\[
\Vert P_t^{\rho}\Vert_{\infty,\infty}\leq\Vert P_{kT}^{\rho}\Vert_{\infty,\infty}\Vert P_\epsilon^{\rho}\Vert_{\infty,\infty}\leq \Vert P_{T}^{\rho}\Vert_{\infty,\infty}^k\Vert P_\epsilon^{\rho}\Vert_{\infty,\infty}\leq \euler^{-KTk}\euler^{kKT/2}\euler^{-K\epsilon}\euler^{KT/2}\leq \euler^{-Kt/2}\euler^{KT/2}.
\]
For the second part of the lemma, note that 
\[
\kappa_T((\rho-K)_-)\leq \frac 14\left(1-\euler^{-KT/2}\right)
\]
implies
\[
\kappa_T((2\rho-2K)_-)\leq \frac 12\left(1-\euler^{-2KT/4}\right),
\]
leading to 
\[
\Vert P_t^{2\rho}\Vert_{\infty,\infty}\leq \euler^{ KT}\euler^{- Kt},\quad t>0.
\]
from the first part of the lemma applied to $2\rho$ and $2K$.
\end{proof}

\section{Lichnerowicz estimate for spectrally positive curvature}

We first prove a Lichnerowicz-type theorem assuming spectrally positive Ricci curvature since it does not depend on the Kato condition. 

\begin{theorem}
%Let $G=(V,w,m)$ be a finite graph, $K>0$, and assume that $CD(\rho,n)$ holds for some function $\rho\colon V\to\R$ and $n\in(0,\infty]$.
%Suppose that $G$ has $K$-spectrally positive Ricci curvature.
%Then, $\lambda \geq K$, where $\lambda$ is the first positive eigenvalue of $L$.

Let $K>0$, $\rho\in\cC(V)$, $n\in(0,\infty]$, and $G$ a $(K,n,\rho)$-spectrally positive finite mwg. Then, we have $\lambda(G) \geq K$, where $\lambda(G)$ is the first positive eigenvalue of $L$ in $L^2(V)$.
\end{theorem}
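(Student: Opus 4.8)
The plan is to adapt Lichnerowicz's classical argument --- integrating the $\Gamma_2$-inequality against an eigenfunction --- and to feed in the spectral hypothesis through a positive ground state supplied by Perron--Frobenius. Since $G$ is finite, fix an eigenfunction $f$ with $Lf=\lambda f$ for $\lambda=\lambda(G)$ and $\langle f,\mathbf 1\rangle=0$. Because $\int_V \Delta g\,\drm m=0$ for every $g$ (the weight $w$ is symmetric), the usual integration-by-parts identities hold: $\int_V\Gamma(f)\,\drm m=\langle Lf,f\rangle=\lambda\Vert f\Vert_2^2$ and, using $\Gamma_2(f)=\tfrac12\Delta\Gamma(f)-\Gamma(f,\Delta f)$ together with $\Delta f=-Lf$, also $\int_V\Gamma_2(f)\,\drm m=\Vert Lf\Vert_2^2=\lambda^2\Vert f\Vert_2^2$. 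Integrating $CD(\rho,n)$ and discarding the nonnegative term $\tfrac1n(\Delta f)^2$ then yields
\[
\lambda^2\Vert f\Vert_2^2=\int_V\Gamma_2(f)\,\drm m\geq\int_V\rho\,\Gamma(f)\,\drm m .
\]
Thus it suffices to prove the lower bound $\int_V\rho\,\Gamma(f)\,\drm m\geq K\lambda\Vert f\Vert_2^2=K\int_V\Gamma(f)\,\drm m$, for then $\lambda^2\geq K\lambda$ and hence $\lambda\geq K$.

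To exploit the form bound $\tfrac12 L+\rho\geq K$, I would convert it into a pointwise statement via Perron--Frobenius. On a finite connected graph the operator $H:=\tfrac12 L+\rho$ has nonpositive off-diagonal entries, so $\euler^{-tH}$ is positivity improving; hence its smallest eigenvalue $\mu=\min\operatorname{spec}(H)$ is simple and carries a strictly positive eigenfunction $\phi>0$, while the hypothesis gives $\mu\geq K$. Dividing $H\phi=\mu\phi$ by $\phi$ produces the pointwise identity
\[
\rho=\mu+\frac{\Delta\phi}{2\phi}\geq K+\frac{\Delta\phi}{2\phi}.
\]
Substituting this into the target integral gives $\int_V\rho\,\Gamma(f)\,\drm m=\mu\lambda\Vert f\Vert_2^2+\tfrac12\int_V\tfrac{\Delta\phi}{\phi}\Gamma(f)\,\drm m$, so everything reduces to controlling the cross term $C:=\tfrac12\int_V\tfrac{\Delta\phi}{\phi}\Gamma(f)\,\drm m$.

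I expect this cross term to be the main obstacle. It has no definite sign, and since we only know $\mu\geq K$ (with possibly $\mu=K$, leaving no slack), Perron--Frobenius alone cannot finish: one genuinely needs to reinvest the curvature information in $CD(\rho,n)$. Here the natural move is to apply the form bound directly to $g=\sqrt{\Gamma(f)}$, giving $\int_V\rho\,\Gamma(f)\,\drm m\geq K\int_V\Gamma(f)\,\drm m-\tfrac12\int_V\Gamma\bigl(\sqrt{\Gamma(f)}\bigr)\,\drm m$, and then to dominate the error $\int_V\Gamma\bigl(\sqrt{\Gamma(f)}\bigr)\,\drm m$ by the ``Hessian'' $\int_V\bigl(\Gamma_2(f)-\rho\,\Gamma(f)\bigr)\,\drm m$; granting such a discrete Kato/Reilly-type estimate, the two bounds combine to $\int_V\rho\,\Gamma(f)\,\drm m\geq 2K\lambda\Vert f\Vert_2^2-\lambda^2\Vert f\Vert_2^2$, which together with the first display forces $\lambda\geq K$. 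The difficulty is precisely that the clean Riemannian inequality $\lvert\nabla\lvert\nabla f\rvert\rvert\leq\lvert\nabla^2 f\rvert$ has no exact discrete analogue --- the graph chain rule holds only up to a defect --- so the whole weight of the proof falls on establishing the integrated comparison (equivalently, on bounding $C$), and this is where I would concentrate the effort.
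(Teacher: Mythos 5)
Your proposal contains a genuine gap, and you correctly sense where it is: the cross term $C=\tfrac12\int_V\tfrac{\Delta\phi}{\phi}\,\Gamma(f)\,\drm m$ is never controlled, and the fallback you sketch (applying the form bound to $\sqrt{\Gamma(f)}$ and invoking a discrete Kato--Reilly comparison) relies on an inequality that, as you note yourself, fails in the discrete setting because the chain rule only holds up to a defect. So the argument as written does not close.

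The missing idea is a different use of the very same Perron--Frobenius eigenfunction: instead of integrating $CD(\rho,n)$ against the constant function and then trying to decompose $\rho=\mu+\tfrac{\Delta\phi}{2\phi}$ pointwise, test the pointwise inequality $\Gamma_2(f)\geq\rho\,\Gamma(f)$ directly against the positive ground state $\phi$ of $H=\tfrac12L+\rho$. Writing $\Gamma_2 f=\tfrac12\Delta\Gamma f-\Gamma(f,\Delta f)=\tfrac12\Delta\Gamma f+\lambda\Gamma f$ for the eigenfunction $f$ and using self-adjointness of $\Delta$, one gets
\[
\tfrac12\langle\Delta\phi,\Gamma f\rangle+\lambda\langle\phi,\Gamma f\rangle=\langle\Gamma_2 f,\phi\rangle\geq\langle\rho\phi,\Gamma f\rangle,
\]
i.e.\ $\lambda\langle\phi,\Gamma f\rangle\geq\langle(\tfrac12L+\rho)\phi,\Gamma f\rangle=E\langle\phi,\Gamma f\rangle$ with $E\geq K$ the bottom of the spectrum of $H$. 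Since $\phi>0$ everywhere and $\Gamma f>0$ somewhere (as $f$ is nonconstant), this gives $\lambda\geq E\geq K$ with no error term at all: the cross term you were fighting is exactly absorbed into $\langle H\phi,\Gamma f\rangle=E\langle\phi,\Gamma f\rangle$ by the eigenvalue equation. This is precisely the paper's proof; your setup (Lichnerowicz plus Perron--Frobenius) has all the right ingredients, but the weight in the integrated Bochner identity must be $\phi$ itself, not the constant function.
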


\begin{proof}

Let $f$ be an eigenfunction to the eigenvalue $\lambda$ of $L$, i.e.,
 $Lf=\lambda f$ and let $\phi \geq 0$. Then,
\begin{align*}
\frac 1 2 \langle\Delta \phi,\Gamma f\rangle + \lambda\langle \phi,\Gamma f\rangle
=\frac 1 2 \langle\phi, \Delta  \Gamma f\rangle - \langle \phi,\Gamma (f,\Delta f)\rangle
=
\langle \Gamma_2 f,\phi \rangle \geq \langle\Gamma f,\rho \phi \rangle  
\end{align*}
and thus,
\[
\lambda\langle \phi,\Gamma f\rangle \geq \langle (L/2 + \rho)\phi,\Gamma f\rangle.
\]
Let $E$ be the smallest eigenvalue of $L/2+\rho$,
i.e.,
\[
E= \inf_{h \in C(V)} \frac{\langle(L/2 + \rho)h,h \rangle}{\|h\|_2^2}.
\]
Then, $E \geq K$.
By the Perron-Frobenius theorem, there exists $\phi :V \to (0,\infty)$ s.t.
\[
(L/2+\rho - E) \phi =0.
\]
Putting together yields
\[
\lambda\langle \phi,\Gamma f\rangle \geq \langle (L/2 + \rho)\phi,\Gamma f\rangle = E \langle \phi, \Gamma f \rangle.
\]
Since $\phi>0$ everywhere and $\Gamma f >0$ somewhere, we have $\lambda \geq E \geq K$. This finishes the proof.
\end{proof}

\section{Gradient estimates and Harnack inequality}

A modification of the classical Ledoux ansatz leads to the following gradient estimate.
\begin{theorem}\label{thm:GradEst}
%Let $G=(V,w,m)$ be a graph with $\D<\infty$ satisfying $CD(\rho,n)$ for $\rho\colon V\to \R$ and $n\in(0,\infty]$. Then, for all bounded $f\in\cC(V)$, we have the pointwise inequality

Let $G$ be a mwg with $\D<\infty$ satisfying $CD(\rho,n)$ for $\rho\colon V\to \R$ and $n\in(0,\infty]$. Then, for all bounded $f\in\cC(V)$, we have the pointwise inequalities
\begin{enumerate}[(i)]
\item
\begin{equation}\label{easyestimate}
\Gamma P_tf\leq P_t^{2\rho}\Gamma f,
\end{equation}
\item
\begin{equation}\label{gradientestimate}
2t\Gamma P_t f\leq P_t^{-2\rho_-} f^2  - (P_t f)^2,
\end{equation}
\item and assuming $\rho\leq K$, we get  
\begin{equation}\label{goodestimate} 
\frac 1{2K}(\euler^{2Kt}-1)\Gamma P_tf\leq \euler^{2Kt}P_t^{2\rho}f^2-(P_tf)^2.
\end{equation}
\end{enumerate}
\end{theorem}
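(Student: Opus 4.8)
The plan is to run Ledoux's semigroup interpolation: for each inequality I interpolate between its two sides along the heat flow and control the derivative of the interpolant by the curvature condition. Everything rests on part (i), so I would prove it first. For a fixed bounded $f$ and $t>0$ set $u_s:=P_{t-s}f$ and consider $\psi(s):=P_s^{2\rho}(\Gamma u_s)$ for $s\in[0,t]$. Its endpoints are precisely the two sides of \eqref{easyestimate}, namely $\psi(0)=\Gamma P_t f$ and $\psi(t)=P_t^{2\rho}\Gamma f$, so it suffices to show that $\psi$ is nondecreasing. Since $\de_s u_s=-\Delta u_s$ and $\tfrac{\drm}{\drm s}P_s^{2\rho}=P_s^{2\rho}(\Delta-2\rho)$, differentiating and using $\de_s\Gamma u_s=2\Gamma(u_s,-\Delta u_s)$ together with the identity $\Delta\Gamma u-2\Gamma(u,\Delta u)=2\Gamma_2 u$ gives $\psi'(s)=2\,P_s^{2\rho}\bigl(\Gamma_2 u_s-\rho\,\Gamma u_s\bigr)$. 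By $CD(\rho,n)$ the bracket is nonnegative, and since the Schr\"odinger semigroup $P_s^{2\rho}$ is positivity preserving, $\psi'\ge 0$; integrating yields (i).

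For (ii) I would interpolate the \emph{squares} rather than the gradients, but along the semigroup with potential $-2\rho_-$. Put $\phi(s):=P_s^{-2\rho_-}(u_s^2)$, so that $\phi(0)=(P_tf)^2$ and $\phi(t)=P_t^{-2\rho_-}f^2$. Using $\tfrac{\drm}{\drm s}P_s^{-2\rho_-}=P_s^{-2\rho_-}(\Delta+2\rho_-)$ and $\Delta u^2-2u\Delta u=2\Gamma u$, the derivative computes to $\phi'(s)=P_s^{-2\rho_-}\bigl(2\Gamma u_s+2\rho_- u_s^2\bigr)\ge 2\,P_s^{-2\rho_-}(\Gamma u_s)$. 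The point of the potential $-2\rho_-$ is that $-2\rho_-\le 2\rho$ pointwise, so the domination $P_s^{-2\rho_-}\ge P_s^{2\rho}$ of positivity preserving semigroups, combined with (i), gives $P_s^{-2\rho_-}(\Gamma u_s)\ge P_s^{2\rho}(\Gamma u_s)\ge\Gamma P_s u_s=\Gamma P_t f$. Hence $\phi'(s)\ge 2\,\Gamma P_t f$ and integrating over $[0,t]$ gives \eqref{gradientestimate}. This step uses no sign assumption on $\rho$, which is exactly why the weaker $P_t^{-2\rho_-}$ (and not the sharper $P_t^{2\rho}$) must appear here.

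For (iii), assuming $\rho\le K$, I would insert an exponential weight to absorb the curvature term. Set $\phi(s):=\euler^{2Ks}P_s^{2\rho}(u_s^2)$, with endpoints $\phi(0)=(P_tf)^2$ and $\phi(t)=\euler^{2Kt}P_t^{2\rho}f^2$. The same computation as in (ii), now with potential $2\rho$, gives $\tfrac{\drm}{\drm s}P_s^{2\rho}(u_s^2)=P_s^{2\rho}(2\Gamma u_s-2\rho u_s^2)$, so that $\phi'(s)=\euler^{2Ks}P_s^{2\rho}\bigl(2\Gamma u_s+2(K-\rho)u_s^2\bigr)$. Since $\rho\le K$ the quadratic term is nonnegative, and by (i) again $P_s^{2\rho}(\Gamma u_s)\ge\Gamma P_t f$; thus $\phi'(s)\ge 2\euler^{2Ks}\Gamma P_t f$. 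Integrating over $[0,t]$ with $\int_0^t\euler^{2Ks}\,\drm s=(\euler^{2Kt}-1)/(2K)$ produces $\tfrac1K(\euler^{2Kt}-1)\,\Gamma P_t f\le\euler^{2Kt}P_t^{2\rho}f^2-(P_tf)^2$, which in particular implies \eqref{goodestimate}; the constant one obtains this way is in fact a bit larger, and it degenerates consistently to the factor $2t$ of \eqref{gradientestimate} as $K\to0$.

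The genuinely delicate points are analytic rather than algebraic. First, I must justify differentiating the interpolants and commuting $P_s^{W}$ with $\Delta-W$; here $\D<\infty$ is crucial, as it makes $L$ bounded and the Schr\"odinger semigroups $P_s^{W}$ norm-continuous, so every manipulation is legitimate and $\Gamma$ of a bounded function stays bounded. Second, and most importantly, I rely twice on structural properties of the perturbed semigroups: that $P_s^{2\rho}$ and $P_s^{-2\rho_-}$ are positivity preserving, and the domination $P_s^{V_1}\ge P_s^{V_2}$ whenever $V_1\le V_2$. Both follow from the Trotter product formula (or Feynman--Kac), each factor $P_{s/n}$ being Markovian and multiplication by $\euler^{-sW/n}$ positivity preserving and monotone in $W$; checking that these apply with the signed but (since $\D<\infty$) bounded potentials $2\rho$ and $-2\rho_-$ is the main thing to verify with care.
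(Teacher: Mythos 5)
Your proposal is correct and follows essentially the same Ledoux-type semigroup interpolation as the paper: the same interpolants $P_s^{2\rho}\Gamma P_{t-s}f$ and $P_s^{-2\rho_-}(P_{t-s}f)^2$, resp.\ $P_s^{2\rho}(P_{t-s}f)^2$, the same use of $CD(\rho,n)$ together with positivity preservation, and the same domination $P_s^{-2\rho_-}\geq P_s^{2\rho}$ in part (ii). The only deviation is cosmetic: you fold the weight $\euler^{2Ks}$ directly into the interpolant for (iii) and thereby observe that the constant can be improved to $\tfrac 1K(\euler^{2Kt}-1)$, which in particular implies the stated \eqref{goodestimate}.
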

\begin{proof}
%Let
%$$P_t^\rho := e^{(\Delta - 2\rho)t}.$$ 
As usual, we omit the dependency of the vertex. Define for fixed $f\in\cC(V)$
\[
H(s):=P_s^{2\rho} \Gamma P_{t-s} f.
\]
Then, following the classical Ledoux ansatz, we have
$H'(s)\geq 0$, where we used the fact that the vertex degree is uniformly bounded as in \cite[Prop.~2.1]{lin2015equivalent}. Integration w.r.t. $t$ gives \eqref{easyestimate}.
%\[
%G(s):= P_s^\rho (P_{t-s} f)^2
%\]
%Then,
%\[
%G'(s) = 2H(s) - 2 P_s^\rho \rho(P_{t-s} f)^2 
%\]
On the other hand,
%\begin{align*}
%P_s^\rho (\rho(P_{t-s} f)^2) &\leq P_s^\rho (\rho P_{t-s} f^2) +   P_s^\rho \rho_- (P_{t-s} f^2) \\&\leq P_s^\rho \rho(P_{t-s} f^2) +   P_s^{-2\rho_-} \rho_- (P_{t-s} f^2) \\
%&=\partial_s \left( - P_s^\rho P_{t-s}f^2 +   P_s^{-2\rho_-} P_{t-s} f^2 \right)/2
%\end{align*}
%
%We write $P_t^{\rho_-} = e^{t(\Delta + 2 \rho_-)}$ and we  have
%\begin{align*}
%P_s^\rho (\rho(P_{t-s} f)^2) &\leq P_s^\rho (\rho P_{t-s} f^2) +   P_s^\rho \rho_- (P_{t-s} f^2) \\&\leq P_s^\rho \rho(P_{t-s} f^2) +   P_s^{\rho-} \rho_- (P_{t-s} f^2) \\
%&=\partial_s \left( - P_s^\rho P_{t-s}f^2 +   P_s^{\rho_-} P_{t-s} f^2 \right)/2
%\end{align*}
the function
%\[
%F(s) = P_s^\rho (P_{t-s} f)^2 - P_s^\rho P_{t-s}f^2 +   P_s^{-2\rho_-} P_{t-s} f^2
%\]
\[
F(s)= P_s^{-2\rho_-}(P_{t-s}f)^2
\]
satisfies $F'\geq 2H$. Hence,
\[
F(t)-F(0) = \int_0^t F'(s)ds \geq \int_0^t 2H(s) ds \geq 2tH(0).
\]
Observing that
\[
F(0)=(P_t f)^2 
\]
and
\[
F(t)= P_t^{-2\rho_-} f^2
\]
leads to \eqref{gradientestimate}.
For \eqref{goodestimate}, we define
\[
\phi:= P_s^{2\rho}(P_{t-s}f)^2.
\]
Then, using \eqref{easyestimate},
\[
\phi '(s)\geq -2K\phi(s)+2\Gamma P_tf.
\]
Hence,
\[
(\euler^{2Ks}\phi(s))'\geq \euler^{2Ks}\Gamma P_tf.
\]
Integration leads to the desired etimate.
\end{proof}

Having the Kato condition, we can estimate $\Gamma P_t f$ in terms of $\|f\|_\infty$.

\begin{corollary}\label{cor:GradientVsInfinityNorm}
%Suppose $G=(V,E)$ is a $(K,\rho)$-spectrally postive Graph for some $K>0$ and $\rho\colon V\to\R$ such that $k_T(\rho_-)\leq \frac 12\left(1-\euler^{-KT/2}\right)$. 
Suppose $K,T>0$, $n\in(0,\infty]$, $\rho\in\cC(V)$, $G$ a mwg with $\D<\infty$ satisfying $CD(\rho,n)$, and
\[
k_T((\rho-K)_-)\leq\frac 14\left(1-\euler^{-KT/2}\right).
\]
Then for all $f \in\ell_{\infty}(V) $ and all $t>0$,
\[
\Gamma P_t f \leq Ke^{KT}\frac{1}{\sinh (Kt)} \|f\|_\infty^2 \leq \frac{e^{KT}}t \|f\|_\infty^2.
\]
\end{corollary}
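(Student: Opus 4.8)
The plan is to derive the corollary from the sharp gradient estimate \eqref{goodestimate} of Theorem \ref{thm:GradEst}(iii), combined with the supremum bound on the doubly-perturbed semigroup furnished by the second part of Lemma \ref{sglemma}. The one obstruction is that \eqref{goodestimate} carries the standing hypothesis $\rho\leq K$, which is not assumed in the corollary. First I would remove it by truncation: set $\tilde\rho:=\rho\wedge K$. Because $\Gamma f\geq 0$ pointwise and $\tilde\rho\leq\rho$, the inequality $CD(\rho,n)$ implies $CD(\tilde\rho,n)$ (decreasing $\rho$ only weakens the curvature-dimension condition), while $\tilde\rho\leq K$ now holds by construction. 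The decisive observation is that $\tilde\rho-K=\min(\rho-K,0)=-(\rho-K)_-$, so that $(\tilde\rho-K)_-=(\rho-K)_-$ and the Kato hypothesis is literally the same for $\tilde\rho$ as for $\rho$.

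With this reduction in place I would apply the second part of Lemma \ref{sglemma} to $\tilde\rho$, obtaining $\|P_t^{2\tilde\rho}\|_{\infty,\infty}\leq\euler^{K(T-t)}$ for all $t>0$, and apply \eqref{goodestimate} to $\tilde\rho$. Discarding the nonnegative term $(P_tf)^2$ and using $\|f^2\|_\infty=\|f\|_\infty^2$ then gives
\[
\frac{1}{2K}(\euler^{2Kt}-1)\,\Gamma P_t f \leq \euler^{2Kt}P_t^{2\tilde\rho}f^2-(P_tf)^2 \leq \euler^{2Kt}\,\|P_t^{2\tilde\rho}\|_{\infty,\infty}\,\|f\|_\infty^2 \leq \euler^{2Kt}\euler^{K(T-t)}\|f\|_\infty^2.
\]

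What remains is purely algebraic. Using the identity $\euler^{2Kt}-1=2\euler^{Kt}\sinh(Kt)$, the left-hand prefactor becomes $\tfrac1K\euler^{Kt}\sinh(Kt)$, and solving for $\Gamma P_t f$ collapses the exponentials:
\[
\Gamma P_t f \leq \frac{K\,\euler^{2Kt}\euler^{K(T-t)}}{\euler^{Kt}\sinh(Kt)}\|f\|_\infty^2 = \frac{K\euler^{KT}}{\sinh(Kt)}\|f\|_\infty^2,
\]
which is the first claimed bound. For the second I would simply invoke the elementary inequality $\sinh(Kt)\geq Kt$, giving $\dfrac{K\euler^{KT}}{\sinh(Kt)}\leq\dfrac{\euler^{KT}}{t}$.

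The only non-mechanical point is the opening move: recognizing that the truncation $\rho\mapsto\rho\wedge K$ simultaneously restores the hypothesis $\rho\leq K$ required by \eqref{goodestimate} and leaves both the curvature-dimension condition and the Kato constant untouched. Once that is seen, the rest is bookkeeping with the $\sinh$ identity, so I do not anticipate any genuine analytic difficulty.
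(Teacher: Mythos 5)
Your proposal is correct and follows essentially the same route as the paper: reduce to $\rho\leq K$, apply \eqref{goodestimate} together with the bound $\Vert P_t^{2\rho}\Vert_{\infty,\infty}\leq \euler^{K(T-t)}$ from Lemma~\ref{sglemma}, and finish with the $\sinh$ identity. The only difference is that the paper simply asserts ``without loss of generality $\rho\leq K$'' while you spell out the truncation $\tilde\rho=\rho\wedge K$ and check that it preserves both $CD(\cdot,n)$ and the Kato constant, which is a welcome (and correct) justification of that step.
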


\begin{proof}
Without loss of generality, we can assume $\rho \leq K$.
By Theorem~\ref{thm:GradEst}, we have
\[
\Gamma P_t f \leq \frac{2K}{e^{2Kt} - 1} e^{2Kt} P_t^{2\rho} f^2.
\]
By Lemma~\ref{sglemma}, we have
\[
P_t^{2\rho} f^2 \leq e^{K(T-t)}\|f\|_\infty^2.
\]
Putting together gives
\[
\Gamma P_t f \leq Ke^{KT}\frac{2e^{Kt}}{e^{2Kt}-1}\|f\|_\infty^2 = Ke^{KT}\frac{1}{\sinh (Kt)}\|f\|_\infty^2 \leq \frac{e^{KT}}t \|f\|_\infty^2.
\]
This finishes the proof.
\end{proof}

The following corollary generalizes the Harnack inequality from \cite{chung2014harnack} to non-constant curvature bounds.

\begin{corollary}
%Suppose $G=(V,E)$ is a $(K,\rho)$-spectrally postive Graph for some $K>0$ and $\rho\colon V\to\R$ such that $k_T(\rho_-)\leq \frac 12\left(1-\euler^{-KT/2}\right)$. 
Suppose $K,T>0$, $n\in(0,\infty]$, $\rho\in\cC(V)$, $G$ a finite mwg satisfying $CD(\rho,n)$, and
\[
k_T((\rho-K)_-)\leq\frac 14\left(1-\euler^{-KT/2}\right).
\]

Then, for any eigenfunction $f$ with eigenvalue $\lambda$ of $L$ in $L^2(G)$ we have, 
\[
\Vert \Gamma f\Vert_\infty\leq {2 \euler^{1+TK}} \cdot \lambda \Vert f\Vert_\infty^2.
\]
\end{corollary}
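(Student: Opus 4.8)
The plan is to exploit the fact that an eigenfunction evolves trivially under the heat semigroup and then feed this into the gradient estimate of Corollary~\ref{cor:GradientVsInfinityNorm}. Since $Lf=\lambda f$, the semigroup acts by $P_t f=\euler^{-\lambda t}f$ for all $t>0$. Because $\Gamma$ is bilinear, scaling $f$ by $\euler^{-\lambda t}$ scales $\Gamma f$ by the square, so $\Gamma P_t f=\euler^{-2\lambda t}\Gamma f$ pointwise. This is the only structural input specific to eigenfunctions; everything else is the general gradient bound.

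Next I would invoke Corollary~\ref{cor:GradientVsInfinityNorm}, whose Kato hypothesis is exactly the one assumed here, to get the pointwise bound
\[
\Gamma P_t f \leq \frac{\euler^{KT}}{t}\,\|f\|_\infty^2.
\]
Substituting $\Gamma P_t f=\euler^{-2\lambda t}\Gamma f$ and rearranging yields, for every $t>0$ and every vertex,
\[
\Gamma f \leq \frac{\euler^{KT}}{t}\,\euler^{2\lambda t}\,\|f\|_\infty^2,
\]
and taking the supremum over $V$ gives the same bound for $\|\Gamma f\|_\infty$. At this point $t>0$ is still a free parameter.

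The remaining step is to optimize over $t$. Writing the $t$-dependent factor as $t^{-1}\euler^{2\lambda t}$ and minimizing (equivalently, minimizing $2\lambda t-\log t$) gives the critical point $t=\tfrac{1}{2\lambda}$, at which $t^{-1}\euler^{2\lambda t}=2\lambda\,\euler$. Plugging this back in produces
\[
\|\Gamma f\|_\infty \leq 2\,\euler^{1+TK}\,\lambda\,\|f\|_\infty^2,
\]
which is the claimed inequality. There is no genuine obstacle here: the argument is a two-line reduction to Corollary~\ref{cor:GradientVsInfinityNorm} followed by a one-variable optimization. The only point requiring a moment's care is that the eigenvalue $\lambda$ of $L$ is positive (so that $t=1/(2\lambda)$ is a legitimate positive time and the optimization is meaningful); for the trivial eigenvalue $\lambda=0$ the eigenfunction is constant and $\Gamma f\equiv 0$, so the estimate holds vacuously.
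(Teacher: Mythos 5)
Your proposal is correct and follows essentially the same route as the paper: use $P_tf=\euler^{-\lambda t}f$ so that $\Gamma P_tf=\euler^{-2\lambda t}\Gamma f$, apply Corollary~\ref{cor:GradientVsInfinityNorm}, and choose $t=(2\lambda)^{-1}$. The additional remarks on optimality of this choice of $t$ and on the trivial case $\lambda=0$ are fine but not needed.
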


\begin{proof}
Let $f$ be an eigenfunction of $L$ with eigenvalue $\lambda$. We have for any $t>0$, by Corollary~\ref{cor:GradientVsInfinityNorm},
%\[
%2t \euler^{-2t\lambda}\Gamma f \leq 2t \Gamma P_tf\leq P_t^{-2\rho_-} f^2\leq \Vert P_t^{-2\rho_-}\Vert_{\infty,\infty}\Vert f\Vert_\infty^2\leq\euler^{K(T+t)/2}\Vert f\Vert_\infty^2.
%\]
\[
\euler^{-2t\lambda}\Gamma f= \Gamma P_tf\leq \frac{\euler^{KT}}{t}\Vert f\Vert_\infty^2.
\]
Choosing $t=(2\lambda)^{-1}$ gives the result.
\end{proof}

\section{Diameter bounds and curvature in Kato class}

Similarly to \cite{liu2018bakry,
liu2017distance}, we apply a gradient estimate (Theorem~\ref{thm:GradEst}) to conclude a diameter bound.

\begin{theorem}
%Let $K>0$ and $G$ be a graph with $\D<\infty$ satisfying $CD(\rho,n)$ for $\rho\colon V\to \R$ and $n\in(0,\infty]$, and assume that there is a $T>0$ such that $k_T((\rho-K)_-)\leq 1-\euler^{-KT/2}$. Then $G$ has finite diameter and finite fundamental group.
Let $T,K>0$, $n\in(0,\infty]$, $\rho\in\cC(V)$, $G$ a mwg with $\D<\infty$ and $CD(\rho,n)$. Suppose
\[
k_T((\rho-K)_-)\leq\frac 14\left(1-\euler^{-KT/2}\right).
\]
Then,
\[
\diam(G) \leq 4\D\euler^{KT/2}/K.
\]
% and finite fundamental group.
\end{theorem}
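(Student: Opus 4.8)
The plan is to feed the \emph{distance function itself} into the commutation estimate \eqref{easyestimate}, rather than the semigroup of an indicator. The point is that $\phi:=d(\cdot,x_0)$ is $1$-Lipschitz, so its carr\'e du champ is bounded independently of the diameter, whereas any estimate linear in $\|\phi\|_\infty$ would be useless since $\|\phi\|_\infty$ is itself of order $\diam G$. Concretely, fix $x_0,y_0$ with $d(x_0,y_0)=\diam G=:D$ and set $\phi:=d(\cdot,x_0)$ (truncated at height $D$ if $G$ is infinite, to remain inside the bounded-function hypothesis of Theorem~\ref{thm:GradEst}). Since $|\phi(y)-\phi(x)|\le 1$ whenever $w(x,y)>0$, we get the pointwise bound $\Gamma\phi\le \tfrac12\Deg\le\tfrac12\D$.

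Next I would run the following three-step chain. First, \eqref{easyestimate} needs only $CD(\rho,n)$ and $\D<\infty$, both in force, and the standing hypothesis $k_T((\rho-K)_-)\le\frac14(1-\euler^{-KT/2})$ is exactly the one in Lemma~\ref{sglemma} yielding $\|P_s^{2\rho}\|_{\infty,\infty}\le\euler^{K(T-s)}$; since $P_s^{2\rho}$ is positivity preserving and $\Gamma\phi\ge0$, this gives
\[
\Gamma P_s\phi\le P_s^{2\rho}\Gamma\phi\le\|P_s^{2\rho}\|_{\infty,\infty}\,\|\Gamma\phi\|_\infty\le\tfrac12\D\,\euler^{K(T-s)}.
\]
Second, I would convert this into a Laplacian bound by Cauchy--Schwarz: for any $g$ one has $|\Delta g(x)|=|\sum_y q(x,y)(g(y)-g(x))|\le\sqrt{\Deg(x)}\sqrt{2\Gamma g(x)}\le\sqrt{2\D\,\Gamma g(x)}$, so applied to $g=P_s\phi$ this yields $|\Delta P_s\phi|\le\D\,\euler^{K(T-s)/2}$. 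This is where the maximal degree $\D$ enters the final bound. Third, using $\partial_s P_s\phi=\Delta P_s\phi$ and $P_0\phi=\phi$ and integrating,
\[
|P_t\phi(x)-\phi(x)|\le\int_0^t|\Delta P_s\phi(x)|\,\drm s\le\D\,\euler^{KT/2}\int_0^\infty\euler^{-Ks/2}\,\drm s=\frac{2\D\,\euler^{KT/2}}{K}=:M
\]
for every vertex $x$ and every $t>0$.

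To finish, I would let $t\to\infty$. On a finite connected graph $P_t\phi$ converges to the constant $\bar\phi$ (its $m$-average), so the displayed bound passes to the limit as $|\phi(x)-\bar\phi|\le M$ for all $x$, whence the oscillation satisfies $\max\phi-\min\phi\le 2M$. But $\phi(x_0)=0$ and $\phi(y_0)=D$ force $\max\phi-\min\phi\ge D$, and therefore $D\le 2M=4\D\,\euler^{KT/2}/K$, which is the claimed bound.

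The hard part is the passage $t\to\infty$: the equilibration $P_t\phi\to\bar\phi$ is immediate for finite connected graphs but needs a genuine homogenization (or truncation-and-limit) argument in the infinite case. This is largely cosmetic here, since finiteness of the right-hand side together with local finiteness already forces $\diam G<\infty$ and hence $V$ finite; the only real care needed is to justify the truncation of $\phi$ and the interchange of the time integral with the limit.
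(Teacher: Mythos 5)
Your argument is essentially the paper's own proof: the same chain \eqref{easyestimate} $\to$ Lemma~\ref{sglemma} $\to$ $(\Delta g)^2\le 2\Deg\,\Gamma g$ $\to$ integration of $\partial_s P_s\phi=\Delta P_s\phi$ applied to a $1$-Lipschitz distance-type function, with the same constants. The only (cosmetic) differences are that the paper uses $f=(d(\cdot,y)-R)_+$ and a three-term triangle inequality in place of your oscillation-around-the-limit formulation, and it too leaves the equilibration $|P_tf(x)-P_tf(y)|\to 0$ essentially unjustified, so your closing remark on that point is no less rigorous than the original.
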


\begin{proof}
We follow the proof of \cite[Theorem~2.1]{liu2018bakry}. First, note that for any $g\in\cC(V)$, we have $(\Delta g(x))^2\leq 2\Deg(x)\Gamma g (x)$, $x\in V$. Fix $y\in V$, $R>0$, and define $f(x):= (d(x,y)-R)_+$. Thanks to $\Gamma f\leq 1$, and \eqref{easyestimate}, we have for all $t>0$ and all $x \in V$,

\[
\vert \partial_t P_tf(x)\vert^2 =\vert \Delta P_t f(x)\vert^2\leq 2\Deg(x)\Gamma P_tf(x)\leq 2\D P^{2\rho}_t\Gamma f(x) \leq \D^2\euler^{KT}\euler^{-Kt}
\]
where the last inequality  follows from Lemma~\ref{sglemma}.

Therefore, 
\begin{align*}
\vert P_t f- f\vert\leq \int_0^t\vert \partial_s P_s f\vert \drm s \leq \int_0^\infty\vert \partial_s P_s f\vert \drm s\leq 2\D\euler^{KT/2}/K.
\end{align*}
%Letting $t\to \infty$ implies $\vert P_\infty f- f\vert\leq $. 

The triangle inequality yields 
\[
d(x,y)\leq \vert f(x)-f(y)\vert\leq \vert P_tf(x)-f(x)\vert +\vert P_tf(y)-f(y)\vert +\vert P_tf(x)-P_tf(y)\vert\to 4\D\euler^{KT/2}/K, \quad t\to \infty.
\]
This finishes the proof as $x,y$ are chosen arbitrarily. 
%That the fundamental group is finite follows from \cite{kempton2017relationships}.
\end{proof}

\section{Fundamental group and spectrally positive curvature}
It has been shown in \cite{kempton2017relationships} that strictly positive Bakry Emery curvature everywhere implies finiteness of the fundamental group.
By the fundamental group we refer to the $CW$-complex consisting of the graph as $1$-skeleton where all cycles of length 3 and 4 are filled with disks.
In order to obtain finiteness of the fundamental group, it turns out to be sufficient to assume spectrally positive curvature which is significantly weaker than the  Kato condition or a uniform positive lower curvature bound.

\begin{theorem}\label{thm:FundamentalGroup}
Let $G$ be a spectrally positive finite mwg. Then, the fundamental group is finite, i.e., the universal cover of the $CW$-complex explained above is finite.
\end{theorem}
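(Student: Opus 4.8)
The plan is to prove finiteness of $\pi_1$ by showing directly that the universal cover $\tilde G=(\tilde V,\tilde w,\tilde m)$ is a finite graph; since $G$ is finite and the covering $\pi\colon\tilde G\to G$ is $\lvert\pi_1\rvert$-sheeted, finiteness of $\tilde V$ is equivalent to finiteness of $\pi_1$. The covering map restricts from the CW-complex to a local isomorphism of mwg, so $\tilde G$ is connected, locally finite, shares the same maximal degree $\D<\infty$, and — because $CD(\rho,n)$ is a purely local (vertex-wise) condition on $\Gamma$ and $\Gamma_2$ — the lift satisfies $CD(\tilde\rho,n)$ with $\tilde\rho:=\rho\circ\pi$. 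Hence every ingredient of Theorem~\ref{thm:GradEst} is available on $\tilde G$.

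The second step transports spectral positivity into a usable semigroup bound on the cover. From the proof of the Lichnerowicz estimate I take the Perron--Frobenius eigenfunction $\phi>0$ of $\tfrac12 L+\rho$ on the finite connected graph $G$, with $(\tfrac12 L+\rho)\phi=E\phi$ and $E\ge K$. Its lift $\tilde\phi:=\phi\circ\pi$ is strictly positive, $\pi_1$-periodic, hence bounded between $\min\phi>0$ and $\max\phi<\infty$, and it solves $(\tilde L+2\tilde\rho)\tilde\phi=2E\tilde\phi$ pointwise. Since $\D<\infty$, the operator $\tilde L+2\tilde\rho$ is bounded on $\ell^\infty(\tilde V)$ and generates a positivity-preserving semigroup, so $\tilde P_t^{2\tilde\rho}\tilde\phi=\euler^{-2Et}\tilde\phi$. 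Sandwiching the constant via $\mathbf 1\le(\min\phi)^{-1}\tilde\phi$ yields
\[
\tilde P_t^{2\tilde\rho}\mathbf 1\;\le\;\frac{\max\phi}{\min\phi}\,\euler^{-2Et}\;\le\;\frac{\max\phi}{\min\phi}\,\euler^{-2Kt},\qquad t>0,
\]
with a constant depending only on the finite base $G$, uniform across all sheets. This is the replacement for the Kato-type estimate of Lemma~\ref{sglemma}; unlike the diameter bound of the previous section, it uses \emph{only} spectral positivity.

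The third step runs the diameter argument of the preceding section verbatim on $\tilde G$, feeding in this bound. Fixing $\tilde y\in\tilde V$, $R>0$ and $f(\tilde x):=(d(\tilde x,\tilde y)-R)_+$ with $\Gamma f\le 1$, the inequality $(\tilde\Delta g)^2\le 2\tilde\Deg\,\Gamma g$ together with the gradient estimate \eqref{easyestimate} on $\tilde G$ gives
\[
\lvert\partial_t\tilde P_tf\rvert^2=\lvert\tilde\Delta\tilde P_tf\rvert^2\le 2\D\,\tilde P_t^{2\tilde\rho}\Gamma f\le 2\D\,\tilde P_t^{2\tilde\rho}\mathbf 1\le 2\D\,\frac{\max\phi}{\min\phi}\,\euler^{-2Kt}.
\]
Because the right-hand side decays exponentially, $\int_0^\infty\lvert\partial_t\tilde P_tf\rvert\,\drm t$ is finite, bounded by a constant $D_0=D_0(\D,K,\phi)$ independent of $\tilde y$ and $R$, and the triangle inequality (exactly as in the proof of that diameter bound) forces $\diam(\tilde G)\le 2D_0<\infty$. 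A connected, locally finite graph of finite combinatorial diameter has finitely many vertices, so $\tilde G$ is finite and $\pi_1$ is finite.

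The main obstacle, and the only place genuine care is needed, is the passage to the \emph{infinite} cover in the second and third steps: the ground state $\tilde\phi$ is not in $L^2(\tilde V)$, so $\tilde P_t^{2\tilde\rho}\tilde\phi=\euler^{-2Et}\tilde\phi$ must be read through the bounded-operator functional calculus on $\ell^\infty(\tilde V)$ (legitimate precisely because $\D<\infty$), and positivity preservation of $\tilde P_t^{2\tilde\rho}$ must be deduced from the sign structure of $\tilde L+2\tilde\rho$ (a diagonal part minus a nonnegative kernel, via a Trotter product). One must also justify applying \eqref{easyestimate} to the unbounded distance function $f$ on $\tilde G$ — the technical point already implicit in the diameter theorem — which I would handle by the same truncation and limiting procedure used there. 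What ultimately makes the scheme work is that the decay constant $\max\phi/\min\phi$ originates from the finite base and is therefore uniform over the infinitely many fundamental domains; this uniformity is what upgrades a vertex-wise gradient estimate to a global diameter bound and thereby excludes an infinite cover.
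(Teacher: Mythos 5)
Your proposal is correct and follows essentially the same route as the paper: lift the $CD$ condition and the Perron--Frobenius ground state of $\tfrac12 L+\rho$ from the finite base to the universal cover, sandwich $P_t^{2\rho\circ\pi}\mathbf 1$ between multiples of the lifted eigenfunction to get exponential decay, and feed this into the gradient/diameter argument for a truncated distance function to conclude $\diam(\widetilde G)<\infty$. The only cosmetic difference is that the paper uses the cutoff $f=d(\widetilde x,\cdot)\wedge C$ to keep $f$ bounded, whereas you use $(d-R)_+$ and defer to a truncation argument, which amounts to the same thing.
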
 

\begin{proof}
Since $G$ is spectrally positive, it satisfies $CD(\rho,\infty)$ with $\frac 1 2 L +\rho >0$ for some $\rho:V \to \R$.
Let $\widetilde G$ be the 1-skeleton of the universal cover with edge weights and vertex measure inherited from $G$. Let $\phi: \widetilde G \to G$ be the corresponding covering map.
Since the Bakry-\'Emery curvature of a vertex only depends on its second neighborhood, we have that
$\widetilde G$ satisfies $CD( \rho \circ \phi,\infty)$.
Let $\eta:V \to \R$ be the eigenfunction to the smallest eigenvalue $K>0$ of $\frac 1 2 L + \rho$.
Then, $\eta$ is positive by the Perron Frobenius theorem.
Moreover, $\eta \circ \phi$ is a positive eigenfunction of $\frac 1 2 L_{\widetilde G} + \rho \circ \phi$.
Let $\widetilde x$ be a vertex in $\widetilde G$ and let $f := d(\widetilde x, \cdot) \wedge C$ for some large $C$ we will choose later. Then by Theorem~\ref{thm:GradEst}, we have
\[
\Gamma P_t f \leq P_t^{2\rho\circ \phi} \Gamma f \leq \frac 1 2 \D P_t^{2\rho\circ \phi} 1
\]
where $\Gamma$ and $P_t$ are the operators acting on $\widetilde G$.
Moreover, we have
\[
(\min \eta ) \cdot P_t^{2\rho \circ \phi} 1 \leq P_t^{2\rho \circ \phi} \eta\circ \phi \leq \euler^{-2Kt}   \eta\circ \phi  \leq \euler^{-2Kt} \max \eta
\]
and thus,
\[
(\partial_t P_t f)^2=(\Delta P_t f)^2 \leq 2\D \Gamma P_t f \leq \D^2 \cdot \frac{\max \eta}{\min \eta} \cdot \euler^{-2Kt}
\]
Taking square root and integrating gives
\[
|\lim_{t\to \infty} P_t f - f| \leq \frac{\D}K \sqrt{\frac{\max \eta}{\min \eta}}
\]
As $P_t$ converges to a constant in space as $t \to \infty$, we obtain for all vertices $\widetilde y$ in $\widetilde G$,
\[
|f(\widetilde x) - f(\widetilde y)| \leq \frac{2\D}K \sqrt{\frac{\max \eta}{\min \eta}}.
\]
When choosing the cutoff constant $C$ larger than the right hand side, this shows that the diameter of $\widetilde G$ is finite as $\widetilde x$ and $\widetilde y$ are chosen arbitrarily.
This proves that the universal cover and thus the fundamental group are finite.
\end{proof}

We now apply the theorem to graphs with non-negative curvature everywhere and positive curvature somewhere. The following corollary answers a question in \cite{kempton2017relationships} subsequent to Corollary~6.4 therein.

\begin{corollary}
Let $G$ be a connected graph with non-negative curvature everywhere and positive curvature at some vertex.
Then, the fundamental group is finite.
\end{corollary}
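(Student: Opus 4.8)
The plan is to deduce this from Theorem~\ref{thm:FundamentalGroup} by checking that its hypotheses are subsumed by the ones here, i.e.\ that a finite connected graph with non-negative curvature everywhere and positive curvature somewhere is automatically spectrally positive. Write $\rho\colon V\to\R$ for the optimal (pointwise largest) Bakry-\'Emery curvature function, so that $CD(\rho,\infty)$ holds by construction. The hypotheses translate to $\rho\geq 0$ on all of $V$ and $\rho(x_0)>0$ at some vertex $x_0$. What remains is to produce a constant $K>0$ with $\tfrac 12 L+\rho\geq K$ in quadratic form sense; once this is in hand, $G$ is $(K,\infty,\rho)$-spectrally positive and Theorem~\ref{thm:FundamentalGroup} applies verbatim.

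First I would note that on a finite graph the operator $\tfrac 12 L+\rho$ is self-adjoint on the finite-dimensional space $L^2(V)$, so its smallest eigenvalue is realized as
\[
E=\inf_{h\neq 0}\frac{\frac 12\langle Lh,h\rangle+\langle\rho h,h\rangle}{\|h\|_2^2},
\]
and the infimum is attained at some eigenfunction $h_0\neq 0$. Since $L\geq 0$ and $\rho\geq 0$, both summands in the numerator are non-negative, giving $E\geq 0$ immediately. The key step is to rule out $E=0$: if $E=0$ then $\langle Lh_0,h_0\rangle=0$ and $\langle\rho h_0,h_0\rangle=0$ simultaneously. The first equality places $h_0$ in the kernel of $L$, which for a connected graph is exactly the constant functions; the second, evaluated on a nonzero constant, reads $\sum_{x}\rho(x)\,m(x)=0$, forcing $\rho\equiv 0$ and contradicting $\rho(x_0)>0$. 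Hence $E>0$, and we may take $K=E$.

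The only genuine obstacle is this separation argument $E>0$, and it is precisely where both hypotheses are consumed: connectivity pins the kernel of $L$ down to the constants, and the single vertex of strictly positive curvature then breaks the one remaining degeneracy. I expect the rest to be routine, as the self-adjointness and compactness of the unit sphere in $L^2(V)$ make the passage from \emph{``the two forms have no common nonzero null vector''} to \emph{``$E>0$''} automatic in finite dimensions. It is worth flagging that finiteness of $G$ is essential here and not merely inherited from Theorem~\ref{thm:FundamentalGroup}: on an infinite graph one can localize a slowly varying test function far from $x_0$ to drive both $\langle Lh,h\rangle$ and $\langle\rho h,h\rangle$ to zero, so $E$ may degenerate and the quadratic-form bound $\tfrac 12 L+\rho\geq K$ can fail despite $\rho\geq 0$ with strict positivity at one point.
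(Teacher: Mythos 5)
Your proposal is correct and follows the same route as the paper: both reduce to Theorem~\ref{thm:FundamentalGroup} by showing that $\rho\geq 0$ with $\rho(x_0)>0$ and connectedness force $\tfrac 12 L+\rho>0$ on the finite graph. The paper states this implication in one line, while you supply the (correct) justification via the kernel of $L$ being the constants.
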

\begin{proof}
By assumption, $G$ satisfies $CD(\rho,\infty)$ with $\rho \geq 0$ and $\rho(x) >0$ for some vertex $x$. Since $G$ is connected, this yields
\[
\frac 1 2 L + \rho >0.
\]
Applying Theorem~\ref{thm:FundamentalGroup} proves the corollary.
\end{proof}

\section{Buser's inequality and curvature in Kato class}
We prove a Buser inequality under our generalized curvature conditions. Therefore, we need to introduce the Cheeger constant of a finite weighted graph $G$, that is given by 

$$h(G):= \inf\left\{\frac{\vert \partial U\vert}{\vol U}\mid \emptyset \neq U\subset V, \vol U\leq \frac 12 \vol V\right\},$$

where $\vert \partial U\vert := \sum_{x\in U, y\in V\setminus U} w(x,y)$ and $\vol U=\sum_{x\in U}m(x)$. 

Buser's inequality gives the counterpart to the well known Cheeger inequality stating that the first positive eigenvalue is larger than the square of the Cheeger constant $h$ up to a constant factor. Buser inequality is the reverse inequality which has been proven classically under non-negative Ricci curvature \cite{buser1982note}.
Recently, there has been significant interest to obtain analogous results in the discrete setting (see \cite{liu2019curvature,liu2018eigenvalue,
liu2019buser,klartag2016discrete} for Bakry Emery curvature, \cite{munch2019non} for Ollivier curvature, and \cite{erbar2018poincare} for entropic curvature)

\begin{proposition}
%Suppose $G=(V,E)$ is a finite $(K,\rho)$-spectrally postive Graph for some $K>0$ and $\rho\colon V\to\R$ such that $k_T(\rho_-)\leq \frac 12\left(1-\euler^{-KT/2}\right)$. Then, there is a $c>0$ such that 

Suppose $K,T>0$,  $\rho\in\cC(V)$, $G$ a mwg with $\D<\infty$ and $CD(\rho,\infty)$, and
\[
k_T((\rho-K)_-)<\frac 14\left(1-\euler^{-KT/2}\right).
\] 
Then, there is a constant  $c=c(K,T)>0$ such that

\[
\Vert f-P_tf\Vert_1\leq c \,\sqrt t\, \Vert \sqrt{\Gamma f}\Vert_1.
\]
\end{proposition}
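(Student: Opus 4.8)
The plan is to bound the $L^1$-difference $\Vert f - P_t f\Vert_1$ by writing it as a time integral of the generator and then using the gradient estimate \eqref{easyestimate} together with the Kato-type semigroup bound from Lemma~\ref{sglemma}. First I would write
\[
f - P_t f = -\int_0^t \partial_s P_s f\,\drm s = \int_0^t L P_s f\,\drm s = -\int_0^t \Delta P_s f\,\drm s,
\]
so that by the triangle inequality in $L^1$,
\[
\Vert f - P_t f\Vert_1 \leq \int_0^t \Vert \Delta P_s f\Vert_1\,\drm s.
\]
The goal is thus to control $\Vert \Delta P_s f\Vert_1$ pointwise in $s$ by $\Vert\sqrt{\Gamma f}\Vert_1$ times something integrable near $s=0$.

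The key step is to pass from $\Delta$ to $\Gamma$ via the Cauchy--Schwarz-type bound $(\Delta g)^2 \leq 2\Deg\,\Gamma g \leq 2\D\,\Gamma g$, already used in the diameter proof. This gives the pointwise estimate $\vert \Delta P_s f\vert \leq \sqrt{2\D}\,\sqrt{\Gamma P_s f}$, and then \eqref{easyestimate} yields $\Gamma P_s f \leq P_s^{2\rho}\Gamma f$. The subtle point is that I have $\sqrt{\Gamma f}$ on the right-hand side of the claim, not $\Gamma f$, so I need to move the square root through the semigroup. The natural route is to apply Cauchy--Schwarz for the (sub-Markovian-like) kernel of $P_s^{2\rho}$: writing $P_s^{2\rho}$ against the measure $m$ and using that its adjoint applied to $1$ is controlled, one gets
\[
\Vert \sqrt{P_s^{2\rho}\Gamma f}\Vert_1 \leq \bigl\Vert (P_s^{2\rho})^*\mathbf 1\bigr\Vert_\infty^{1/2}\,\Vert \sqrt{\Gamma f}\Vert_1,
\]
so the $L^1$-norm of $\sqrt{P_s^{2\rho}\Gamma f}$ is bounded by a constant (uniform in $s$ on $[0,T]$) times $\Vert\sqrt{\Gamma f}\Vert_1$. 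Here the $L^\infty$-bound $\Vert P_s^{2\rho}\Vert_{\infty,\infty}\leq \euler^{K(T-s)}$ from the second part of Lemma~\ref{sglemma}, together with symmetry/duality of the perturbed semigroup in this weighted setting, supplies the needed uniform control of $(P_s^{2\rho})^*\mathbf 1$.

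Putting these together, I would obtain
\[
\Vert \Delta P_s f\Vert_1 \leq \sqrt{2\D}\,\Vert \sqrt{\Gamma P_s f}\Vert_1 \leq \sqrt{2\D}\,\Vert \sqrt{P_s^{2\rho}\Gamma f}\Vert_1 \leq C(K,T)\,\Vert \sqrt{\Gamma f}\Vert_1,
\]
with $C(K,T)$ uniform for $s\in[0,T]$. Integrating over $s\in[0,t]$ with $t\leq T$ then gives $\Vert f - P_t f\Vert_1 \leq C(K,T)\,t\,\Vert\sqrt{\Gamma f}\Vert_1$, which is even stronger than the claimed $\sqrt t$; I expect the $\sqrt t$ formulation to come from being less wasteful, namely splitting the integral so that one factor carries a $1/\sqrt{s}$ weight while the other is integrated, or by applying the sharper estimate \eqref{gradientestimate} which produces a factor $1/\sqrt{2s}$ in $\sqrt{\Gamma P_s f}$. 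The main obstacle will be the passage of the square root through $P_s^{2\rho}$ rigorously: justifying the Cauchy--Schwarz step for the \emph{perturbed} semigroup and controlling $(P_s^{2\rho})^*\mathbf 1$ uniformly, since $P_s^{2\rho}$ is not sub-Markovian and its kernel is not explicitly known — this is exactly where the Kato condition, via Lemma~\ref{sglemma}, must be invoked to keep the constant finite.
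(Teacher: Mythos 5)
Your reduction $\Vert f-P_tf\Vert_1\leq\int_0^t\Vert\Delta P_sf\Vert_1\,\drm s$, the pointwise bound $\vert\Delta P_sf\vert\leq\sqrt{2\D}\sqrt{\Gamma P_sf}$ and the use of \eqref{easyestimate} are fine as far as they go, but the step you yourself flag as the main obstacle is where the argument genuinely breaks: the inequality
\[
\Vert \sqrt{P_s^{2\rho}\Gamma f}\Vert_1 \leq \bigl\Vert (P_s^{2\rho})^*\mathbf 1\bigr\Vert_\infty^{1/2}\,\Vert \sqrt{\Gamma f}\Vert_1
\]
is not a consequence of Cauchy--Schwarz and is false in general. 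For a positivity preserving operator the square root of an average \emph{dominates} the average of the square roots (Jensen, concavity of $\sqrt{\cdot}$), so the inequality you need points the wrong way. Concretely, testing the purported functional inequality on a nonnegative $h$ supported at a single vertex $x_0$ gives $\Vert\sqrt{P_s^{2\rho}h}\Vert_1=\sqrt{h(x_0)m(x_0)}\sum_xm(x)\sqrt{p_s(x,x_0)}$, which exceeds $\Vert\sqrt h\Vert_1=m(x_0)\sqrt{h(x_0)}$ by a factor that grows with the size of the support of $p_s(\cdot,x_0)$; an honest Cauchy--Schwarz produces $\vol(V)^{1/2}\Vert\Gamma f\Vert_1^{1/2}$, not $\Vert\sqrt{\Gamma f}\Vert_1$. (The only way to salvage your direct route is $\Vert\Delta P_sf\Vert_1=\Vert P_s\Delta f\Vert_1\leq\Vert\Delta f\Vert_1\leq\sqrt{2\D}\,\Vert\sqrt{\Gamma f}\Vert_1$, which does give a bound linear in $t$ --- but with a constant depending on $\D$ rather than the claimed $c(K,T)$, and such a bound is useless for the Buser inequality, where the $\sqrt t$ growth for large $t$ is essential.)

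The paper's proof (following Klartag--Kozma--Ralli--Tetali) avoids pushing the square root through the perturbed semigroup by dualizing: pair $f-P_tf$ with a test function $g\in L^\infty$, use self-adjointness of $P_s$ and Green's formula to write $\sum m\,g\,(f-P_tf)=\int_0^t\sum m\,\Gamma(P_sg,f)\,\drm s$, then apply the pointwise Cauchy--Schwarz $\Gamma(P_sg,f)\leq\sqrt{\Gamma P_sg}\sqrt{\Gamma f}$ and H\"older to obtain $\Vert\sqrt{\Gamma f}\Vert_1\,\Vert\Gamma P_sg\Vert_\infty^{1/2}$. The integrable singularity $s^{-1/2}$ then comes from Corollary~\ref{cor:GradientVsInfinityNorm} applied to the \emph{bounded test function} $g$, namely $\Vert\Gamma P_sg\Vert_\infty\leq\euler^{KT}s^{-1}\Vert g\Vert_\infty^2$, and integrating gives $2\euler^{KT}\sqrt t$; choosing $g=\operatorname{sgn}(f-P_tf)$ finishes. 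This is the idea missing from your sketch: the Kato-controlled gradient estimate must be applied on the dual side, to $g$ in $L^\infty$, rather than to $f$ or to $\Gamma f$ in $L^1$.
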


\begin{proof}
The proof is essentially the same as in \cite{klartag2016discrete}. Nevertheless, we give a short outline to calculate the constants. Again, w.l.o.g., we assume $\rho\leq K$. Since
\[
f-P_tf=\int_0^t \Delta P_sf\drm s,
\]
we have for any $g\in L^\infty$ by Corollary~\ref{cor:GradientVsInfinityNorm},
\begin{align*}
\sum m(x)g(x)( f(x)-P_sf(x)) &=\int_0^t \sum m(x) \Gamma(P_s g, f)(x)\drm s
\leq \int_0^t\sum m(x)\sqrt{\Gamma(P_sg)(x)}\sqrt{\Gamma f(x)}\drm s\\
&\leq \int_0^t\Vert \sqrt{\Gamma f}\Vert_1 \sqrt{\Vert\Gamma{P_sg}\Vert_\infty}\drm s
\leq \Vert \sqrt{\Gamma f}\Vert_1\int_0^t \left(\frac {e^{KT}}{s}\Vert g\Vert_\infty^2\right)^\frac{1}{2}\drm s\\
&=  2\euler^{KT} \Vert \sqrt{\Gamma f}\Vert_1 \Vert g\Vert_\infty \sqrt{t} .
\end{align*}
Choosing $g = \operatorname{sgn} (f-P_t f)$  leads to the result.
\end{proof}

\begin{theorem}
%Suppose $G=(V,E)$ is a finite $(K,\rho)$-spectrally postive Graph for some $K>0$ and $\rho\colon V\to\R$ such that $k_T(\rho_-)\leq \frac 12\left(1-\euler^{-KT/2}\right)$. 

Suppose $K,T>0$, $\rho\in\cC(V)$, $G$ a mwg with $\D<\infty$ and $CD(\rho,\infty)$, and
\[
k_T((\rho-K)_-)<\frac 14\left(1-\euler^{-KT/2}\right).
\] 
Then, there exists a constant  $c>0$ depending only on $K,T$ and on $\inf_{x\sim y} \frac{w(x,y)}{m(x)}$ such that
\[
\lambda_1\leq c h^2.
\]
\end{theorem}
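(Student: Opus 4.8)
The plan is to feed the indicator of a Cheeger-optimal set into the $L^1$ heat-semigroup estimate of the preceding Proposition and play it off against a spectral-gap lower bound, balancing the time $t$ at order $1/\lambda_1$. Since $G$ is finite, the infimum defining $h:=h(G)$ is attained by some $U\subset V$ with $\vol U\le\frac12\vol V$ and $|\partial U|=h\,\vol U$. I set $a:=\vol U/\vol V\le\frac12$ and $g:=\mathbf 1_U-a$, which is mean zero, satisfies $\Vert g\Vert_\infty=1-a\le 1$, and has $\Vert g\Vert_2^2=a(1-a)\vol V$. Because $\Gamma$ annihilates constants, $\Gamma g=\Gamma\mathbf 1_U$, so the Proposition gives $\Vert g-P_tg\Vert_1\le c\sqrt t\,\Vert\sqrt{\Gamma\mathbf 1_U}\Vert_1$ for all $t>0$, with $c=c(K,T)$.

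First I would bound $\Vert\sqrt{\Gamma\mathbf 1_U}\Vert_1$ by the cut weight. A direct computation gives $\Gamma\mathbf 1_U(x)=\tfrac1{2m(x)}c(x)$, where $c(x):=\sum_{y}w(x,y)\,\mathbf 1\{\mathbf 1_U(y)\neq\mathbf 1_U(x)\}$ is the weight of the edges leaving $U$ at $x$, so that $m(x)\sqrt{\Gamma\mathbf 1_U(x)}=\sqrt{m(x)c(x)/2}$. Writing $w_0:=\inf_{x\sim y}w(x,y)/m(x)$, every vertex incident to the cut satisfies $c(x)/m(x)\ge w_0$, hence $m(x)\le c(x)/w_0$ and $m(x)\sqrt{\Gamma\mathbf 1_U(x)}\le c(x)/\sqrt{2w_0}$. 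Summing and using $\sum_x c(x)=2|\partial U|$ (each cut edge is counted at both endpoints) yields $\Vert\sqrt{\Gamma\mathbf 1_U}\Vert_1\le\sqrt{2/w_0}\,|\partial U|$. This is the only place where the dependence of the final constant on $\inf_{x\sim y}w(x,y)/m(x)$ enters.

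For the matching lower bound I would avoid the lossy $L^2$-to-$L^1$ passage and instead pair $g-P_tg$ with $g$ itself. Since $g$ is orthogonal to the constants, the spectral theorem gives $\langle P_tg,g\rangle\le\euler^{-\lambda_1t}\Vert g\Vert_2^2$, whence $\langle g-P_tg,g\rangle\ge(1-\euler^{-\lambda_1t})\Vert g\Vert_2^2$, while Hölder's inequality gives $\langle g-P_tg,g\rangle\le\Vert g-P_tg\Vert_1\,\Vert g\Vert_\infty$. Therefore
\[
\Vert g-P_tg\Vert_1\ge(1-\euler^{-\lambda_1t})\,\frac{\Vert g\Vert_2^2}{\Vert g\Vert_\infty}=(1-\euler^{-\lambda_1t})\,\vol U,
\]
using $\Vert g\Vert_2^2=a(1-a)\vol V$ and $\Vert g\Vert_\infty=1-a$. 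Combining with the upper bound and dividing by $\vol U$ gives $1-\euler^{-\lambda_1t}\le c\sqrt{2/w_0}\,\sqrt t\,h$. Choosing $t=1/\lambda_1$ turns this into $\sqrt{\lambda_1}\le\frac{c\sqrt{2/w_0}}{1-\euler^{-1}}\,h$, i.e. $\lambda_1\le c'h^2$ with $c'$ depending only on $K,T$ (through $c$) and on $\inf_{x\sim y}w(x,y)/m(x)$.

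The step I expect to be the crux is precisely this lower bound on $\Vert g-P_tg\Vert_1$: the naive route through $\Vert P_tg\Vert_1\le\sqrt{\vol V}\,\euler^{-\lambda_1t}\Vert g\Vert_2$ costs a spurious logarithmic factor in $\vol U/\vol V$ and only yields $\lambda_1\lesssim h^2\log(\cdot)$. Testing against the bounded function $g$ and exploiting $\Vert g\Vert_\infty\le1$ is what removes this loss and produces the clean quadratic bound; after that, lining up the constants so that $c'$ depends only on the advertised quantities is routine.
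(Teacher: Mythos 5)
Your proposal is correct and is essentially the argument the paper relies on: the paper's proof simply defers to Theorem~4.2 of Klartag--Kozma--Ralli--Tetali with the constant from the preceding Proposition, and that argument is exactly the one you carry out (indicator of a Cheeger-optimal set, the bound $\Vert\sqrt{\Gamma\mathbf 1_U}\Vert_1\le\sqrt{2/w_0}\,\abs{\partial U}$ via the minimal normalized edge weight, and a spectral-gap lower bound on $\Vert \mathbf 1_U-P_t\mathbf 1_U\Vert_1$ of the form $(1-\euler^{-\lambda_1 t})\vol U$, balanced at $t=1/\lambda_1$). Your route to the lower bound via pairing $g-P_tg$ with the mean-zero $g$ and H\"older is a cosmetic variant of the usual identity $\Vert\mathbf 1_U-P_t\mathbf 1_U\Vert_1=2(\vol U-\Vert P_{t/2}\mathbf 1_U\Vert_2^2)$ and yields the same inequality, so the two proofs coincide in substance.
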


\begin{proof} The proof is exactly the same as that of \cite[Theorem~4.2]{klartag2016discrete} apart from the different constant given by the above proposition.
\end{proof}
%----------------------------------------------
%\printbibliography

\bibliographystyle{alpha}
%\bibliography{Bibliographyspectralpositivity.bib}

\end{document}